\newcommand{\R}{\mathbb{R}}                     
\newcommand{\CP}{\mathbb{C}\mathrm{P}}
\newcommand{\CH}{\mathbb{C}\mathrm{H}}
\newcommand{\po}{\forall}                       
\newcommand{\f}{\rightarrow}                    
\newcommand{\C}{\mathbb{C}}                     
\newcommand{\de}{\partial}                      
\newcommand{\M}{\mathrm{M}}
\newcommand{\g}{\mathrm{g}}
\newcommand{\ww}{\mathrm{\omega}}
\newcommand{\dd}{\mathrm{D}}
\newcommand{\Kj}{\mathrm{K}}
\newcommand{\N}{\mathrm{N}}
\newcommand{\K}{K\"{a}hler}
\newtheorem{theor}{Theorem}
\newtheorem{prop}[theor]{Proposition}
\newtheorem{remar}[theor]{Remark}
\title{\Large\bf  K\"ahler--Einstein  submanifolds of   the infinite dimensional  projective space}
\date{}
\author{\small Andrea Loi\\ \small Dipartimento di Matematica e Informatica
-- Universit\`{a} di Cagliari -- Italy\\ \small e-mail address: loi@unica.it\vspace{0.3cm} \\ \small and\vspace{0.3cm} \\ \small Michela Zedda\\ \small Dipartimento di Matematica e Informatica
-- Universit\`{a} di Cagliari -- Italy\\ \small e-mail address: michela.zedda@gmail.com }
\begin{document} 
\maketitle


\begin{abstract}
This paper consists of two main results.
In the first one  we describe all  
\K\ immersions of  a bounded symmetric domain 
into the  infinite  dimensional complex projective space in terms 
of the  Wallach set of the domain.  In the second one  we exhibit  an  example of  
complete  and  non-homogeneous    \K\--Einstein metric with negative scalar curvature which admits a  \K\ immersion  into the  infinite dimensional complex projective space.

\noindent
{\it{Keywords}}: K\"ahler metrics; bounded domains;
diastasis
function; symmetric space; complex space form; Wallach set.

\noindent
{\it{Subj.Class}}: 53C55, 58C25.

\end{abstract}
\section{Introduction and statement of the main results}
This paper deals
with holomorphic and  isometric (from now on  \K ) immersions
of complete noncompact \K\--Einstein manifolds into $\CP^{\infty}$, the
infinite dimensional complex projective space equipped with the Fubini--Study metric $g_{FS}$.
Throughout this paper if a \K\ manifold $(M, g)$ admits a \K\ immersion into $\CP^{\infty}$ then we will say either  that $(M, g)$ is a {\em \K\ submanifold} of $\CP^{\infty}$ or that $g$ is {\em projectively induced}.
The only known examples  of projectively induced \K\--Einstein metrics are the flat metric 
on the  complex Euclidean space ${\C}^n$  (see \cite{Cal}) and the Bergman metric on a  bounded homogeneous domains  (see \cite{kodomain}).
Hence,  it is natural to ask if there exists a complete  nonhomogeneous \K\--Einstein submanifolds of ${\CP^{\infty}}$.
The following theorem, which is the first result of this paper,
gives a positive answer to this question (see Section \ref{cartanhartogs} for the definition of Cartan--Hartogs domain).
 \begin{theor}\label{corw1}
There exists a continuous family of homothetic, complete,   nonhomogeneous and  projectively induced K\"ahler-Einstein metrics on each Cartan--Hartogs domain based on an irreducible bounded symmetric domain
of rank $r\neq 1$.
\end{theor}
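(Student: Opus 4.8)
The plan is to realize the required metrics explicitly as homotheties of the natural \K\ metric of the Cartan--Hartogs domain and to reduce their projective inducedness to the Wallach--set dictionary established in the first part of the paper. Write $M_\Omega(\mu)=\{(z,\zeta)\in\Omega\times\C : |\zeta|^2<N_\Omega(z,\bar z)^\mu\}$, where $N_\Omega$ is the generic norm of the irreducible bounded symmetric domain $\Omega$ (of complex dimension $n$, rank $r$ and genus $\gamma$), and consider the potential $\Phi=-\log T$ with $T=N_\Omega(z,\bar z)^\mu-|\zeta|^2$, giving $\omega=\tfrac i2\partial\bar\partial\Phi$ and a metric $g$. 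After checking that $g$ is genuinely positive definite, I would carry out four verifications: that $g$ is \K--Einstein, complete, non-homogeneous for $r\neq1$, and that a continuous interval of its homotheties is projectively induced.

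For the Einstein property I would compute $\det g$ in the coordinates $(z,\zeta)$. Writing $h_{\alpha\bar\beta}=-\partial_{z_\alpha}\partial_{\bar z_\beta}\log N_\Omega$ for the normalized metric of $\Omega$ and exploiting the block structure of $g$, a Sherman--Morrison reduction collapses the mixed base--fibre terms and yields
\begin{equation*}
\det g = c\,\mu^{n}\,\frac{N_\Omega(z,\bar z)^{\,\mu(n+1)-\gamma}}{T^{\,n+2}},\qquad c>0,
\end{equation*}
using $\det h = c'\,N_\Omega^{-\gamma}$ (the Bergman kernel of $\Omega$ is a constant times $N_\Omega^{-\gamma}$). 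Hence $\ric(\omega)=(\mu(n+1)-\gamma)\,i\,h-2(n+2)\,\omega$, and since $i\,h$ is supported on the $\Omega$--directions while $\omega$ is not, $g$ is \K--Einstein with $\ric(\omega)=-2(n+2)\,\omega$ (negative scalar curvature) exactly when $\mu(n+1)=\gamma$, i.e. $\mu_0=\gamma/(n+1)$. Note that $r=1$ forces $\Omega=\mathbb B^n$, $\gamma=n+1$, $\mu_0=1$ and $M_\Omega(\mu_0)=\mathbb B^{n+1}$, the homogeneous ball — which explains the rank hypothesis. Completeness I would deduce from $\Phi\to+\infty$ at $\partial M_\Omega(\mu_0)$ together with a comparison of $g$ with a complete Bergman-type metric along rays approaching the boundary; any positive multiple $\lambda g$ is then again complete and \K--Einstein, with the same isometry group as $g$.

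The core step is projective inducedness of $\lambda g$. Since $N_\Omega(z,\bar0)\equiv1$, the diastasis of $g$ centered at the origin equals $\Phi$, so by Calabi's criterion \cite{Cal} the metric $\lambda g$ is projectively induced precisely when the sesquianalytic kernel $e^{\lambda\Phi}-1=(N_\Omega(z,\bar w)^\mu-\zeta\bar\eta)^{-\lambda}-1$ is positive definite. Expanding in the fibre variable gives
\begin{equation*}
\big(N_\Omega(z,\bar w)^\mu-\zeta\bar\eta\big)^{-\lambda}=\sum_{k\ge0}\frac{(\lambda)_k}{k!}\,(\zeta\bar\eta)^k\,N_\Omega(z,\bar w)^{-\mu(\lambda+k)},
\end{equation*}
with Pochhammer coefficients $(\lambda)_k/k!>0$. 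As the homogeneous components in $\zeta$ live in mutually orthogonal subspaces, the whole kernel is positive definite if and only if each factor $N_\Omega^{-\mu(\lambda+k)}$ is, i.e. if and only if $\mu(\lambda+k)\in W(\Omega)$ for every $k\ge0$, where $W(\Omega)$ is the Wallach set — exactly the positive-definiteness dictionary underlying the first theorem of the paper. Since the continuous part of $W(\Omega)$ is the half-line $\big((r-1)\tfrac a2,\infty\big)$ ($a$ the multiplicity), all exponents $\mu(\lambda+k)\ge\mu\lambda$ lie in $W(\Omega)$ as soon as $\mu\lambda>(r-1)\tfrac a2$. Thus $\lambda g$ is projectively induced for every
\begin{equation*}
\lambda>\lambda_{\min}:=\frac{(r-1)a}{2\mu_0},
\end{equation*}
and subtracting $1$ removes only the $(k=0)$ constant term, so the resulting immersion into $\CP^\infty$ has coordinates $\sqrt{c_{k,m}}\,\zeta^k p_m(z)$ over the nonconstant monomials. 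This produces the asserted continuous family $\{\lambda g\}_{\lambda>\lambda_{\min}}$ of homothetic, complete, projectively induced \K--Einstein metrics.

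It remains to show non-homogeneity for $r\neq1$, which I expect to be the main obstacle: the metric is Einstein, so its constant scalar curvature cannot rule out homogeneity, and one must instead exhibit a non-constant \emph{local} isometry invariant. My plan is to compute a higher-order curvature invariant of $g$ (for instance $\|R\|^2$ or $\|\nabla R\|^2$) from the explicit metric above and to show that, as a function of the single real variable $|\zeta|^2/N_\Omega^{\mu_0}$ controlling the fibre direction, it is non-constant precisely when $r\neq1$ (for $r=1$ it degenerates, consistently with $M_\Omega(\mu_0)=\mathbb B^{n+1}$ being symmetric). Holomorphic isometries preserve such invariants, so their non-constancy forbids a transitive isometry group, and homothety does not affect this — hence the entire family is non-homogeneous. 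The delicate points are the bookkeeping of the curvature tensor of the mixed base--fibre metric and isolating its exact dependence on $r$, which is where the hypothesis $r\neq1$ enters decisively.
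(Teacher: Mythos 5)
Your reduction of projective inducedness to the Wallach set is correct and is essentially the paper's Proposition \ref{lemmadiastM}, reached by a different and arguably cleaner route. Where the paper expands $e^{c\dd_0}-1$ in the full monomial basis and decomposes the infinite Calabi matrix into blocks $F_{z(i)},F_{w(i)},F_{(z,w)(i)}$, identifying each block with the Calabi matrix of $e^{(c+m)\frac{\mu}{\gamma}\dd_0^{\Omega}}-1$, you expand only in the fibre variable, $\bigl(N_\Omega^{\mu}-\zeta\bar\eta\bigr)^{-\lambda}=\sum_{k\ge 0}\frac{(\lambda)_k}{k!}(\zeta\bar\eta)^k N_\Omega^{-\mu(\lambda+k)}$, and read off the condition $\mu(\lambda+k)\in W(\Omega)$ for all $k\ge 0$ directly. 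The two are equivalent: your ``only if'' direction (each $\zeta$-homogeneous component must itself be a positive definite kernel) is exactly what the paper's block decomposition encodes, and it does need an explicit justification --- e.g.\ extracting the $k$-th component by $S^1$-averaging in $\zeta$ and $\eta$, which preserves positive definiteness --- rather than the bare assertion that the components ``live in mutually orthogonal subspaces.'' Your threshold $\lambda>\frac{(r-1)a}{2\mu_0}=\frac{(r-1)a(d+1)}{2\gamma}$ matches the paper's (which even allows equality, since $(r-1)\frac{a}{2}$ lies in the discrete Wallach set), and your identification of the diastasis via $N_\Omega(z,\bar 0)\equiv 1$ is the first half of Proposition \ref{lemmadiastM}.

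The genuine gap is in the remaining three properties. The paper does not prove that $g(\mu_0)$ is Einstein, complete and non-homogeneous for $r\neq 1$: it quotes this as Theorem \ref{roos}, due to Roos, Wang, Yin, Zhang and Zhang (\cite{roos}, \cite{compl}). Your Einstein computation is a plausible sketch (the claimed determinant formula and the conclusion $\mu_0=\gamma/(d+1)$ agree with the cited result), and completeness by boundary comparison is believable but not carried out. Non-homogeneity, however, is left entirely as a plan (``compute $\|R\|^2$ or $\|\nabla R\|^2$ and show it is non-constant''), and this is not a routine verification: isolating a curvature invariant of the mixed base--fibre metric that is provably non-constant exactly when $r\neq 1$ is the substantive content of the cited papers. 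As written, your argument establishes the existence of a continuous family of homothetic, projectively induced K\"ahler--Einstein metrics, but not that they are non-homogeneous; you should either cite the Roos et al.\ result, as the paper does, or actually carry out the curvature computation you describe.
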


Our result should be compared with the compact case. 
First,  it is an open problem   to classify the compact \K\--Einstein manifolds which admit a \K\ immersion into a finite dimensional complex projective space.
 Actually, the only known examples 
 of such manifolds are homogeneous and it is conjecturally true  these are the only ones
 (see e.g. \cite{note}, \cite{ch}, \cite{tak} and \cite{ts}).
Moreover, a family as in the previous theorem   cannot exist  in the compact case.
Indeed if  $cg$ are homothetic  \K\ metrics on a compact complex manifold $M$ such that 
$(M, c g), c\geq 1$ admits    a K\"ahler immersion into the finite dimensional complex projective space
 then, by simple topological reasons,  $c$ is forced  to be  a positive integer.

The proof of Theorem \ref{corw1} is based  on  recent results (see \cite{roos} and \cite{compl})
about  Einstein metrics on Cartan--Hartogs domains 
and on the following theorem which is  the second result of this paper  (see next section or \cite{arazy} for the definition of  the Wallach set of the domain $\Omega$).
\begin{theor}\label{wallach}
Let $\Omega$ be an irreducible bounded symmetric domain endowed with its Bergman metric $g_B$.  Then 
$(\Omega,c\g_B)$ admits a equivariant  K\"ahler immersion into $\CP^\infty$ if and only if $c\gamma$ belongs to $W(\Omega)\setminus \{0\}$, where $\gamma$ denotes the genus of $\Omega$.
\end{theor}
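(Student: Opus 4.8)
The plan is to combine Calabi's criterion for the existence of a \K\ immersion into $\CP^\infty$ (see \cite{Cal}) with the explicit form of the Bergman kernel of a bounded symmetric domain. First I would recall Calabi's theorem in the form adapted to $\CP^\infty$: a real-analytic \K\ metric on a simply connected manifold is projectively induced if and only if, after fixing a base point $p$ and computing the diastasis $D_p$ centered at $p$, the function $e^{D_p}-1$ can be written globally as a sum $\sum_j|\varphi_j|^2$ of squared moduli of holomorphic functions vanishing at $p$; equivalently, the polarized kernel $e^{D_p(z,\bar w)}$ is positive definite. In that case the coherent states map $z\mapsto[1:\varphi_1(z):\varphi_2(z):\cdots]$ furnishes the immersion, with $p$ mapped to the base point.

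Next I would compute the diastasis of $(\Omega,c\g_B)$ in the Harish--Chandra realization. There the Bergman kernel is a positive constant multiple of $h(z,\bar w)^{-\gamma}$, where $h$ is the generic norm of $\Omega$ and $\gamma$ its genus; since $h(z,0)=h(0,\bar w)=1$, the diastasis of $g_B$ centered at the origin equals $-\gamma\log h(z,\bar z)$. Rescaling by $c$ gives the diastasis $-c\gamma\log h(z,\bar z)$, hence $e^{D_0(z)}=h(z,\bar z)^{-c\gamma}$, whose polarization is the kernel $h(z,\bar w)^{-c\gamma}$. Combining this with the previous paragraph, $(\Omega,c\g_B)$ is projectively induced precisely when $h(z,\bar w)^{-c\gamma}$ is a positive definite kernel on $\Omega$, the constant term $1=h(z,0)^{-c\gamma}$ supplying the base-point coordinate.

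I would then identify this positivity condition with membership in the Wallach set. By the very definition of $W(\Omega)$ and the Faraut--Koranyi analysis recalled in the next section (see also \cite{arazy}), the kernel $h(z,\bar w)^{-\lambda}$ is positive definite -- that is, it is the reproducing kernel of a Hilbert space $\hilb_\lambda$ of holomorphic functions on $\Omega$ -- if and only if $\lambda\in W(\Omega)$. Applying this with $\lambda=c\gamma$ yields the desired equivalence; the value $\lambda=0$ is excluded because it corresponds to the constant kernel, i.e.\ to $c=0$, which is not a metric and gives only a constant map. For the equivariance, I would use that $\Omega=G/\Kj$ is homogeneous and that, since the Bergman kernel is relatively invariant, $h(z,\bar w)^{-c\gamma}$ transforms under $\phi\in\Aut(\Omega)$ by the automorphy factor built from the complex Jacobian of $\phi$; this makes $G$ act on $\hilb_{c\gamma}$ by a projective unitary representation intertwining the coherent states map, so the resulting immersion is $G$-equivariant, while conversely an equivariant immersion produces exactly such a $G$-invariant positive kernel.

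The main obstacle is the third step. The reduction of the immersion problem to positivity of $h(z,\bar w)^{-c\gamma}$ is essentially bookkeeping, and Calabi's criterion is classical; the genuine content is that the diagonal expansion $h(z,\bar z)^{-c\gamma}=\sum_j|\varphi_j|^2$ holds exactly for $c\gamma\in W(\Omega)$. This rests on the Hua--Schmid decomposition of the space of holomorphic functions on $\Omega$ into irreducible $\Kj$-types and on the explicit determination of the signs of the coefficients in the Faraut--Koranyi expansion of $h^{-\lambda}$, which is precisely what pins down the discrete plus continuous structure of $W(\Omega)$. A secondary technical point is verifying that when the kernel is positive definite the coherent states map is a full, non-degenerate immersion -- so that one obtains a \K\ immersion and not merely a holomorphic map -- which follows because its pullback metric is, by construction, the positive definite metric $c\g_B$.
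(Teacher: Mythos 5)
Your proposal is correct and takes essentially the same route as the paper: both reduce the problem, via the diastasis $c\log(V(\Omega)\Kj(z,z))=-c\gamma\log \N_\Omega(z,z)$ and Calabi's criterion/hereditary property, to the positive definiteness of the kernel $\Kj^c$, which is precisely the defining condition for $c\gamma\in W(\Omega)$. The only cosmetic difference is in how equivariance of the constructed immersion is obtained: you derive it from the transformation law of the Bergman kernel, whereas the paper deduces it from Calabi's rigidity theorem applied to $f\circ h$ for $h\in G$; both are fine.
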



\vskip 0.3cm
The paper is organized as follows. In the next section we recall basic results on Calabi's diastasis function and Calabi's criterion for \K\ immersions into $\CP^{\infty}$. In Section \ref{sec3},
after describing Calabi's diastasis function for the Bergman metric of a bounded symmetric domain,   we prove  Theorem
\ref{wallach}. The last section is dedicated to  the proof 
of Theorem \ref{corw1}.

\vskip 0.3cm

\noindent 
The authors would like to thank Antonio J. Di Scala
for his very useful comments and remarks.

\section{The diastasis function and Calabi's criterion}\label{criterion}
In his seminal paper Calabi  \cite{Cal} (to whom we refer for details and further results) gives necessary and sufficient conditions for a $n$-dimensional  K\"ahler manifold $(M,\g)$ to admit a K\"ahler immersion into a complex space form. The key tool is the introduction of a very particular K\"ahler potential $\dd^M_p(z)$, that Calabi called \emph{diastasis}. Recall that a K\"ahler potential is a smooth function $\Phi$ defined in a neighbourhood of a point $p$ such that $\ww=\frac{i}{2}\bar\de\de\Phi$, where $\ww$ is the K\"ahler form associated to $\g$. In a complex coordinate system $(z)$ around $p$ one has
\begin{equation}
\g_{\alpha\bar\beta}=2\g\left(\frac{\de}{\de z_\alpha},\frac{\de}{\de \bar z_\beta}\right)=\frac{\de^2\Phi}{\de z_\alpha\de\bar z_\beta}.\nonumber
\end{equation}
A K\"ahler potential is not unique: it is defined up to an addition with the real part of a holomorphic function.
If $(M, \g)$ admits a 
K\"ahler immersion into a complex space form then $g$ is real analytic (see Theorem \ref{Calabi}
below). In this case by duplicating the variables $z$ and $\bar z$ a potential $\Phi$ can be complex analytically continued to a function $\tilde \Phi$ defined in a neighbourhood $U$ of the diagonal containing $(p,\bar p)\in M\times \bar M$ (here $\bar M$ denotes the manifold conjugated to $M$). The \emph{diastasis function} is the K\"ahler potential $\dd^M_p$ around $p$ defined by
\begin{equation}
\dd_p^M(q)=\tilde\Phi(q,\bar q)+\tilde\Phi(p,\bar p)-\tilde\Phi(p,\bar q)-\tilde\Phi(q,\bar p).\nonumber
\end{equation}
Observe that $\dd_p^M(q)$ is symmetric in $p$ and $q$ and $\dd_p^M(p)=0$.
The following theorem provides us with a  very useful characterization of the diastasis function.

\begin{theor}[characterization of the diastasis]\label{chardiast}
Among all the K\"ahler potentials the diastasis is characterized by the fact that in every coordinate system $(z)$ centered in $p$, the $\infty\times\infty$ matrix of coefficients $(a_{jk})$ in its power expansion around the origin
\begin{equation}\label{powexdiastc}
\dd^M_{p}(z,\bar z)=\sum_{j,k=0}^{\infty}a_{jk}(z)^{m_j}(\bar z)^{m_k},
\end{equation}
satisfy $a_{j0}=a_{0j}=0$ for every nonnegative integer $j$.
\end{theor}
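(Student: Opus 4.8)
The plan is to establish the two implications separately. I fix a coordinate system $(z)$ centered at $p$, so that $z(p)=0$, and I recall two facts. First, any two K\"ahler potentials of $\g$ near $p$ differ by the real part of a holomorphic function. Second, since the metric is real analytic, every such potential $\Phi$ may be complexified, by duplication of the variables, to a function $\tilde\Phi(z,\bar w)$ holomorphic in $z$ and $\bar w$ with a convergent expansion
$$\tilde\Phi(z,\bar w)=\sum_{j,k=0}^\infty b_{jk}\,(z)^{m_j}(\bar w)^{m_k},$$
where $m_0$ denotes the trivial multi-index, so that $(z)^{m_0}=1$ and $b_{00}=\tilde\Phi(0,\bar 0)$.

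First I would show that the diastasis itself satisfies the vanishing condition. The crucial observation is that substituting $z=0$ in $\tilde\Phi(z,\bar w)$ annihilates every monomial except the purely anti-holomorphic ones, so that $\tilde\Phi(0,\bar z)=\sum_{k}b_{0k}(\bar z)^{m_k}$; symmetrically $\tilde\Phi(z,\bar 0)=\sum_j b_{j0}(z)^{m_j}$ collects exactly the purely holomorphic part. Plugging these into the definition gives
$$\dd_p^M(z,\bar z)=\sum_{j,k}b_{jk}(z)^{m_j}(\bar z)^{m_k}+b_{00}-\sum_{k}b_{0k}(\bar z)^{m_k}-\sum_{j}b_{j0}(z)^{m_j},$$
from which one reads off $a_{jk}=b_{jk}$ for $j,k\ge 1$, while the purely holomorphic, purely anti-holomorphic and constant coefficients cancel identically, yielding $a_{j0}=a_{0j}=0$ for all $j\ge 0$.

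For the converse I would take any K\"ahler potential $\Psi$ of $\g$ whose expansion satisfies $a_{j0}=a_{0j}=0$ and show $\Psi=\dd_p^M$. By the first fact, $\Psi=\dd_p^M+f+\bar f$ for some holomorphic $f(z)=\sum_j c_j (z)^{m_j}$. Here $f$ contributes only to the purely holomorphic column of the coefficient matrix and $\bar f$ only to the purely anti-holomorphic row. Matching these coefficients and using that both $\Psi$ and $\dd_p^M$ have vanishing purely (anti-)holomorphic parts forces $c_j=0$ for every $j\ge 1$; matching the constant term then gives $\re(c_0)=0$. Hence $f+\bar f=2\re(c_0)=0$ and $\Psi=\dd_p^M$.

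The computations are elementary once the bookkeeping is fixed; the only delicate point is the role of the trivial multi-index, that is, keeping track of the constant term together with the fact that the ambiguity in the potential is the real part of a holomorphic function, so that its constant mode is pinned down only up to an imaginary number, which is exactly what makes the normalization $\dd_p^M(p)=0$ consistent. I expect the main conceptual step to be the observation in the first direction that the three subtracted terms in the definition of the diastasis are precisely the purely holomorphic, purely anti-holomorphic, and constant parts of the potential; the remainder is coefficient matching.
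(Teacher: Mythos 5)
Your argument is correct. Note that the paper itself gives no proof of this statement: it is quoted as a known result from Calabi's paper \cite{Cal}, so there is nothing internal to compare against. Your two-step argument --- first checking that in the expansion of $\tilde\Phi(q,\bar q)+\tilde\Phi(p,\bar p)-\tilde\Phi(p,\bar q)-\tilde\Phi(q,\bar p)$ the three subtracted terms are exactly the constant, purely anti-holomorphic and purely holomorphic parts of the potential, and then using that the residual ambiguity $f+\bar f$ lives entirely in the row $a_{0j}$, the column $a_{j0}$ and the constant term --- is precisely Calabi's original proof, and the bookkeeping with the trivial multi-index is handled correctly.
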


Here we are using the following 
convention: we arrange every $n$-tuple of nonnegative
integers as the sequence  $m_j=(m_{j,1},\dots,m_{j,n})$ with non decreasing order, that is $m_0=(0,\dots,0)$ and if $|m_j|=\sum_{\alpha=1}^n m_{j,\alpha}$, $|m_j|\leq |m_{j+1}|$ for all positive integer $j$.
Further
$(z)^{m_j}$ denotes  the monomial in $n$ variables $\prod_{\alpha=1}^n z_\alpha^{m_{j,\alpha}}$.

The importance of the diastasis function for our purposes is expressed by 
the following three theorems due to  Calabi. Recall that  the Fubini-Study metric $g_{FS}$ on 
the infinite dimensional complex projective space  $\CP^\infty$ 
is this metric whose \K\ form $\omega_{FS}$ in homogeneous coordinates $Z_0,\dots Z_j,\dots $
is given by
$\omega_{FS}=\frac{i}{2}\de\bar\de\log (\sum_{j=0}^{\infty}|Z_j|^2)$.
 If $p_0=[1,0,0,\dots]\in U_0=\{Z_0\neq 0\}$,  then in the affine coordinates $z_j=\frac{Z_j}{Z_0}$ the diastasis 
$\dd_{p_0}^{\infty}:U_0\rightarrow \R$ around $p_0$ is 
given by 
$$\dd_{p_0}^{\infty}(z)=\log(1+\sum_{j=1}^\infty|z_j|^2).$$

\begin{theor}[hereditary property]\label{Calabi}
Let $f:(M,\g)\rightarrow \CP^{\infty}$ be a K\"ahler immersion
such that $f(p)=p_0$. Then the metric $\g$ is real analytic and 
$\dd_p^M=\dd_{p_0}^{\infty}\circ f :M\setminus f^{-1}(H_0)\rightarrow \R$,
where $H_0=\CP^{\infty}\setminus U_0$.
\end{theor}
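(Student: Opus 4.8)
The plan is to show that the pullback $\Phi:=\dd_{p_0}^{\infty}\circ f$ is a \K\ potential for $\g$ enjoying the normalization that, by Theorem~\ref{chardiast}, singles out the diastasis. I would begin with real analyticity: the Fubini--Study potential $\log(1+\sum_j|z_j|^2)$ is real analytic and $f$ is holomorphic, hence real analytic, so $\g=f^*g_{FS}$ is real analytic; this is exactly what is needed for the analytic continuation defining $\dd_p^M$ to exist in the first place.

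Next I would express $f$ in affine coordinates. Fix holomorphic coordinates $(w)$ on $M$ centered at $p$; since $f(p)=p_0$ the map reads $f=[1,f_1(w),f_2(w),\dots]$ in the chart $U_0$, with $f_j(0)=0$. Thus on $M\setminus f^{-1}(H_0)$
\[
\Phi(w,\bar w)=\dd_{p_0}^{\infty}(f(w))=\log\Bigl(1+\sum_{j=1}^{\infty}|f_j(w)|^2\Bigr),
\]
and, because the pullback along a holomorphic map commutes with $\de$ and $\bar\de$,
\[
\frac{i}{2}\de\bar\de\Phi=f^*\Bigl(\frac{i}{2}\de\bar\de\,\dd_{p_0}^{\infty}\Bigr)=f^*\omega_{FS}=\ww .
\]
Hence $\Phi$ is a \K\ potential for $\g$ around $p$.

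It then remains to verify the vanishing of the purely holomorphic and antiholomorphic coefficients. Passing to the analytic continuation obtained by duplicating variables, set $F(w,\bar v)=\sum_j f_j(w)\,\bar f_j(\bar v)$, where $\bar f_j$ has the conjugate Taylor coefficients, so that $\tilde\Phi(w,\bar v)=\log(1+F(w,\bar v))$. Since each $f_j$ vanishes at the origin, every monomial of $F$ contains at least one holomorphic and one antiholomorphic factor, whence $F(w,\bar 0)=F(0,\bar v)=0$ and therefore $\tilde\Phi(w,\bar 0)=\tilde\Phi(0,\bar v)=0$. In the expansion $\Phi=\sum_{j,k}a_{jk}(w)^{m_j}(\bar w)^{m_k}$ the purely holomorphic part equals $\sum_j a_{j0}(w)^{m_j}=\tilde\Phi(w,\bar 0)=0$ and, symmetrically, the purely antiholomorphic part vanishes; hence $a_{j0}=a_{0j}=0$ for all $j$. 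By Theorem~\ref{chardiast} this forces $\Phi=\dd_p^M$, that is $\dd_p^M=\dd_{p_0}^{\infty}\circ f$ on $M\setminus f^{-1}(H_0)$.

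The step I expect to be the main obstacle is the infinite-dimensional bookkeeping in the previous paragraph: one must ensure that the series $\sum_j|f_j(w)|^2$ converges locally and that its continuation $\sum_j f_j(w)\bar f_j(\bar v)$ may be rearranged term by term, so that the identities $F(w,\bar 0)=F(0,\bar v)=0$ and the reordering of $\Phi$ into the form \eqref{powexdiastc} are legitimate. Convergence, and the status of $\Phi$ as a genuine convergent power series in $(w,\bar w)$, are guaranteed by $f$ being an honest \K\ immersion into $\CP^\infty$, which also underlies the real analyticity invoked at the outset.
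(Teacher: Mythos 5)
Your argument is correct, and it is essentially the classical one: the paper itself states this theorem without proof, quoting it from Calabi's \emph{Isometric Imbedding of Complex Manifolds}, and your derivation (pull back the Fubini--Study potential, note that $f_j(p)=0$ for $j\geq 1$ forces the purely holomorphic and antiholomorphic coefficients to vanish, then invoke the characterization of the diastasis, Theorem~\ref{chardiast}) is exactly the standard route. The convergence issue you flag at the end is genuinely the only delicate point, but it is harmless: a local holomorphic lift of $f$ to $\ell^2(\C)$ gives $\sum_j|f_j(w)|^2=\|f(w)\|^2<\infty$ by definition, and the polarized series $\sum_j f_j(w)\overline{f_j(v)}$ converges locally uniformly by the Cauchy--Schwarz inequality in $\ell^2$, which legitimizes both the term-by-term evaluation at $\bar v=0$ and the rearrangement into the form (\ref{powexdiastc}).
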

\begin{theor}[Calabi's rigidity]\label{Calabirig}
Let $f_1, f_2 :(M, g)\rightarrow \CP^{\infty}$ be two full \K\ immersions.
Then there exists a unitary transformation $U$ of   $\CP^{\infty}$
such that $f_2=U\circ f_1$.
\end{theor}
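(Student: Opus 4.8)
The plan is to use the hereditary property (Theorem \ref{Calabi}) to show that both immersions encode the same intrinsic datum — the diastasis of $(M,\g)$ — and then to deduce that they differ by a unitary of the ambient Hilbert space $\hilb$, where $\CP^\infty=\p(\hilb)$. Since the unitary group of $\CP^\infty$ acts transitively, I may first compose $f_1$ and $f_2$ with unitaries so that $f_1(p)=f_2(p)=p_0=[1,0,0,\dots]$ at a fixed point $p\in M$; it then suffices to intertwine the normalized maps. Writing $f_i=[1,f_{i,1},f_{i,2},\dots]$ in the affine chart $U_0$ and using the explicit expression $\dd_{p_0}^\infty(z)=\log(1+\sum_j|z_j|^2)$, the hereditary property gives, near $p$,
$$\log\Bigl(1+\sum_k|f_{1,k}|^2\Bigr)=\dd^M_p=\log\Bigl(1+\sum_k|f_{2,k}|^2\Bigr),$$
since both outer members compute the diastasis of $(M,\g)$, which depends on the metric alone and not on the immersion. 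Exponentiating and duplicating the variables $z,\bar z\mapsto z,\bar w$ — legitimate because each side is the diagonal restriction of a function holomorphic in $z$ and antiholomorphic in $w$ — I obtain the kernel identity
$$\sum_k f_{1,k}(z)\,\overline{f_{1,k}(w)}=\sum_k f_{2,k}(z)\,\overline{f_{2,k}(w)},$$
valid near $p$ and hence everywhere on the connected manifold $M$ by analytic continuation. Equivalently, the homogeneous lifts $\hat f_i=(1,f_{i,1},f_{i,2},\dots)$ satisfy $\langle\hat f_1(z),\hat f_1(w)\rangle=\langle\hat f_2(z),\hat f_2(w)\rangle$.

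The crux is to pass from this equality of reproducing kernels to a unitary. The kernel identity guarantees that the correspondence $\hat f_1(z)\mapsto\hat f_2(z)$ extends to a well-defined linear map on $\Span\{\hat f_1(z):z\in M\}$ preserving inner products, hence to an isometry $U$ of its closure onto $\overline{\Span}\{\hat f_2(z):z\in M\}$. Fullness of $f_1$ and $f_2$ means exactly that both of these spans are dense in $\hilb$, so $U$ is a genuine unitary operator on $\hilb$. By construction $U\hat f_1=\hat f_2$, and therefore $U\circ f_1=f_2$ on $\CP^\infty=\p(\hilb)$; undoing the normalizing unitaries of the first step produces the required unitary transformation of $\CP^\infty$ with $f_2=U\circ f_1$.

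The main obstacle lies in this last step. One must verify that the kernel identity really yields a well-defined and inner-product-preserving map — this is where the linear independence of the components, furnished by fullness, is needed to rule out inconsistent assignments — and that the completion is unitary rather than merely isometric, which again uses fullness to guarantee surjectivity. These are precisely the delicate points of the theory of reproducing kernel Hilbert spaces in infinite dimensions (convergence of the defining series and density of the spans), whereas the reduction in the first paragraph is a formal consequence of Calabi's hereditary property and the real-analyticity of projectively induced metrics.
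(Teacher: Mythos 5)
The paper does not prove this theorem: it is stated as a classical result and attributed to Calabi's paper \cite{Cal}. Your argument is correct and is essentially Calabi's original one --- reduce via the hereditary property to an equality of polarized kernels $\langle\hat f_1(z),\hat f_1(w)\rangle=\langle\hat f_2(z),\hat f_2(w)\rangle$ and then invoke the Gram-matrix/reproducing-kernel argument, with fullness supplying density of both spans so that the isometry between them extends to a unitary of the ambient Hilbert space; the only slight misstatement is that well-definedness of the map $\hat f_1(z)\mapsto\hat f_2(z)$ on the span follows from the kernel identity itself (a combination with norm zero is sent to one with norm zero), not from any linear independence of the components.
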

Recall that a holomorphic map $f\!: (M,\g)\rightarrow \CP^\infty$ is said to be  {\em full} if $f(M)$ is not contained in any complex totally geodesic submanifold of $\CP^\infty$.
\begin{theor}[Calabi's criterion]\label{criterium}
A K\"ahler manifold $(M,\g)$ admits a local full K\"ahler immersion into $\CP^\infty$ if and only if the $\infty\times\infty$ matrix of coefficients $(b_{jk})$  in the power expansion
\begin{equation}\label{powexdiastcp}
e^{\dd_p^M(z,\bar z)}-1=\sum_{j,k=0}^{\infty}b_{jk}(z)^{m_j}(\bar z)^{m_k},
\end{equation}
is positive semidefinite of infinite rank. Moreover, if the manifold $M$ is assumed to be  simply connected a local K\"ahler immersion 
can be extended to  a global one.
\end{theor}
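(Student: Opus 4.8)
The plan is to prove the two implications of the equivalence separately and then treat the final global extension statement.

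\medskip

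\noindent\textbf{The ``only if'' direction.} Suppose $(M,\g)$ admits a local full K\"ahler immersion $f$ into $\CP^\infty$. Composing with a unitary motion of $\CP^\infty$ I may assume $f(p)=p_0=[1,0,0,\dots]$, so that near $p$ the map reads $f(z)=[1,f_1(z),f_2(z),\dots]$ in the affine chart $U_0$, with each $f_j$ holomorphic and $f_j(p)=0$. By the hereditary property (Theorem \ref{Calabi}) the diastasis of $M$ at $p$ is the pull-back of the diastasis of $\CP^\infty$ at $p_0$,
$$\dd_p^M(z,\bar z)=\dd_{p_0}^\infty(f(z))=\log\Bigl(1+\sum_{j\geq 1}|f_j(z)|^2\Bigr),$$
whence $e^{\dd_p^M(z,\bar z)}-1=\sum_{j\geq 1}|f_j(z)|^2$. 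Expanding each $f_j(z)=\sum_{k}c_{jk}(z)^{m_k}$ (with $c_{j0}=0$ since $f_j(p)=0$) and collecting monomials, comparison with \eqref{powexdiastcp} gives $b_{jk}=\sum_{s\geq1}c_{sj}\overline{c_{sk}}$. Thus $(b_{jk})$ is the Gram matrix of the rows $(c_{s\,\cdot})$ and so is positive semidefinite. Its rank equals the dimension of the span of the $f_j$, $j\ge1$; as these vanish at $p$ they are independent from the constant $1$, and fullness is exactly the condition that $1,f_1,f_2,\dots$ span an infinite-dimensional space, i.e.\ that the rank is infinite.

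\medskip

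\noindent\textbf{The ``if'' direction.} Conversely, assume $(b_{jk})$ is positive semidefinite of infinite rank. I would first realize it as a Gram matrix: there exist vectors in a separable Hilbert space $\hilb$ whose inner products recover $(b_{jk})$, equivalently a factorization $b_{jk}=\sum_{s\geq 1}c_{sj}\overline{c_{sk}}$ with infinitely many linearly independent rows. Since $\dd_p^M(z,0)\equiv 0$ by the characterization of the diastasis (Theorem \ref{chardiast}), evaluation of \eqref{powexdiastcp} gives $b_{00}=0$ and $b_{j0}=b_{0j}=0$, so the coefficients $c_{s0}$ vanish and the holomorphic functions $\varphi_s(z)=\sum_j c_{sj}(z)^{m_j}$ satisfy $\varphi_s(p)=0$. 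I then set $f(z)=[1,\varphi_1(z),\varphi_2(z),\dots]$; the real analyticity of $\dd_p^M$ guarantees convergence of $\sum_s|\varphi_s|^2$ on a neighbourhood of $p$, so $f$ is a holomorphic map into $\CP^\infty$. By construction $\sum_s|\varphi_s(z)|^2=e^{\dd_p^M(z,\bar z)}-1$, hence $\dd_{p_0}^\infty\circ f=\dd_p^M$; since the diastasis is a K\"ahler potential, applying $\frac{i}{2}\de\bar\de$ yields $f^*\omega_{FS}=\omega$, so $f$ is holomorphic and isometric, and the nondegeneracy of $\g$ forces its differential to be injective, i.e.\ $f$ is a K\"ahler immersion. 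Infinite rank again translates into fullness as above.

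\medskip

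\noindent\textbf{Global extension and main obstacle.} For simply connected $M$ the local immersion extends to a global one. Here I would use the real-analytic continuation of $\dd_p^M$ together with Calabi's rigidity (Theorem \ref{Calabirig}): any two local immersions that overlap on a connected open set differ by a fixed unitary transformation of $\CP^\infty$, so the locally defined pieces are patched by a cocycle of unitaries, whose monodromy is killed by simple connectivity, producing a single global K\"ahler immersion. I expect the genuine difficulties to lie entirely in this ``if'' half: concretely, the factorization of the infinite positive semidefinite matrix into a \emph{convergent} Gram form and the verification that $\sum_s|\varphi_s|^2$ converges on an honest neighbourhood of $p$ (the infinite-dimensional analytic estimates), and then the monodromy argument underlying the global extension. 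The ``only if'' direction is essentially a bookkeeping consequence of the hereditary property.
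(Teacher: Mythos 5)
You cannot be checked against an in-paper proof here: the paper states Theorem \ref{criterium}, together with Theorems \ref{Calabi} and \ref{Calabirig}, as results quoted from Calabi \cite{Cal} and proves none of them. Measured against Calabi's classical argument, your reconstruction has the right architecture: the hereditary property plus Gram-matrix bookkeeping for the ``only if'' half; a Gram factorization of $(b_{jk})$ producing coordinate functions $\varphi_s$ with $\sum_s|\varphi_s|^2=e^{\dd_p^M}-1$ for the ``if'' half, with real analyticity supplying convergence; and rigidity plus simple connectedness for the global statement.

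Two steps need repair before this is a proof. First, your identification of fullness with ``$1,f_1,f_2,\dots$ span an infinite-dimensional space'' is wrong as stated: fullness means there is no nonzero $a\in\ell^2(\C)$ with $a_0+\sum_{j\geq 1}a_jf_j\equiv 0$, which is strictly stronger (duplicating one coordinate function, renormalized so that $\sum_j|f_j|^2$ is unchanged, keeps the span infinite-dimensional but destroys fullness). This does not hurt the ``only if'' direction, where a finite-dimensional span would confine the image to a finite-dimensional projective subspace; but in the ``if'' direction an arbitrary factorization $b_{jk}=\sum_s c_{sj}\overline{c_{sk}}$ with ``infinitely many linearly independent rows'' need not yield a full map. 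You must take the canonical factorization $c_{sj}=\langle v_j,e_s\rangle$, where the vectors $v_j$ realize $(b_{jk})$ as a Gram matrix and $\{e_s\}$ is an orthonormal basis of the closed span of the $v_j$; then a relation $a_0+\sum_s a_s\varphi_s\equiv 0$ gives $a_0=0$ (evaluate at $p$, using $v_0=0$) and forces $\sum_s \bar a_s e_s$ to be orthogonal to every $v_j$, hence zero, so $a=0$ and the constructed map is full. Second, your global extension argument patches ``locally defined pieces'', which presupposes that a local K\"ahler immersion exists around \emph{every} point of $M$, not only near $p$. That the positivity-and-rank condition propagates from one point to all of a connected real-analytic K\"ahler manifold is a theorem of Calabi (constancy of the rank), and it is the one genuinely non-formal ingredient that the monodromy/rigidity patching cannot start without; your sketch leaves it untouched.
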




\section{The diastasis function of bounded symmetric domains and the proof of Theorem \ref{wallach}}\label{sec3}
In the following proposition we describe the diastasis of
a bounded symmetric domain 
and one of its important features which will be a key ingredient 
for the proof of our results (see also \cite{loidiast} for 
 the global aspects of the diastasis function
for these domains).
Recall that the Bergman metric  $\g_B$ is a K\"ahler metric on $\Omega$ whose associated K\"ahler form $\omega_B$ is given by $\omega_B=\frac{i}{2}\de\bar\de\log\Kj(z,z)$, where  $\Kj$ is the reproducing kernel for the Hilbert space of holomorphic $L^2$-functions on $\Omega$,
namely those $f\in Hol (\Omega)$ such that $\int_\Omega |f|^2d\mu (z)<\infty$, where $d\mu (z)$
is the standard Lebesgue measure on $\C^n$.
\begin{prop}\label{diastdom}
Let $\Omega$ be a bounded symmetric domain.
Then the diastasis for its Bergman metric $\g_B$ around the origin  is
\begin{equation}\label{diastberg}
\dd^\Omega_0(z)=\log(V(\Omega)\Kj(z,z)),
\end{equation}
where $V(\Omega)$ denotes 
the total volume of $\Omega$ with respect to the Euclidean measure of the ambient complex Euclidean space.
Moreover the matrix $(b_{jk})$ given by  (\ref{powexdiastcp}) for $\dd_0^\Omega$ satisfy $b_{jk}=0$ whenever $|m_j|\neq|m_k|$.
\end{prop}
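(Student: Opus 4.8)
The plan is to exhibit the explicit potential, verify it is the diastasis by means of the defining formula, and then read off the bigrading of the $b_{jk}$ from the circular symmetry of the Harish--Chandra realization. First, since $\omega_B=\frac{i}{2}\de\bar\de\log\Kj(z,z)$ and adding the constant $\log V(\Omega)$ inside the logarithm does not affect $\de\bar\de$, the function $\Phi(z)=\log(V(\Omega)\Kj(z,z))$ is a \K\ potential for $\g_B$ near the origin. Because $\Kj(z,w)$ is sesqui-holomorphic, $\Phi$ continues analytically to $\tilde\Phi(z,\bar w)=\log(V(\Omega)\Kj(z,w))$ on a neighbourhood of the diagonal, and plugging $p=0$, $q=z$ into the definition of the diastasis gives
\[
\dd^\Omega_0(z)=\log(V(\Omega)\Kj(z,z))+\log(V(\Omega)\Kj(0,0))-\log(V(\Omega)\Kj(0,z))-\log(V(\Omega)\Kj(z,0)).
\]
Thus (\ref{diastberg}) is equivalent to the vanishing of the three correction terms, i.e. to the identity $\Kj(z,0)=\Kj(0,z)=\Kj(0,0)=1/V(\Omega)$.

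Second, I would establish this last identity from the classical fact that the Harish--Chandra realization presents $\Omega$ as a bounded \emph{circled} domain, invariant under the rotations $z\mapsto e^{i\vartheta}z$. Since the Lebesgue measure and $\Omega$ are rotation invariant, homogeneous polynomials of distinct total degree are $L^2$-orthogonal, so the Bergman space splits orthogonally into its homogeneous pieces and one may pick an orthonormal basis $\{e_\nu\}$ consisting of homogeneous polynomials. Writing $\Kj(z,w)=\sum_\nu e_\nu(z)\overline{e_\nu(w)}$ and observing that every nonconstant $e_\nu$ vanishes at the origin, only the degree-zero element $e_0\equiv 1/\sqrt{V(\Omega)}$ survives when one argument is set to $0$; this gives $\Kj(z,0)=\Kj(0,z)=\Kj(0,0)=1/V(\Omega)$ and hence (\ref{diastberg}). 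Equivalently, one can invoke Theorem \ref{chardiast} directly: setting $\bar z=0$ yields $\log(V(\Omega)\Kj(z,0))=0$, so the purely holomorphic and purely antiholomorphic Taylor coefficients of $\log(V(\Omega)\Kj(z,z))$ all vanish, which characterizes it as the diastasis.

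For the statement on $(b_{jk})$, recall from (\ref{powexdiastcp}) together with (\ref{diastberg}) that $e^{\dd^\Omega_0(z,\bar z)}-1=V(\Omega)\Kj(z,z)-1=\sum_{j,k}b_{jk}(z)^{m_j}(\bar z)^{m_k}$. The circular symmetry forces the grading: the rotation $z\mapsto e^{i\vartheta}z$ is an automorphism of $\Omega$ with constant unimodular Jacobian contribution, so the transformation rule for the Bergman kernel gives $\Kj(e^{i\vartheta}z,e^{i\vartheta}w)=\Kj(z,w)$, and on the diagonal $\Kj(e^{i\vartheta}z,e^{i\vartheta}z)=\Kj(z,z)$ for every $\vartheta$. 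Substituting $z\mapsto e^{i\vartheta}z$ in the expansion multiplies the $(j,k)$ term by $e^{i\vartheta(|m_j|-|m_k|)}$, so invariance for all $\vartheta$ forces every coefficient with $|m_j|\neq|m_k|$ to vanish; subtracting the constant $1$ removes only the $(0,0)$ term. Hence $b_{jk}=0$ whenever $|m_j|\neq|m_k|$.

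The single nontrivial input, and the place where I expect to rely on the structure theory rather than on computation, is the classical assertion that the Harish--Chandra embedding realizes a bounded symmetric domain as a circled bounded domain symmetric about the origin. Once this is granted, both conclusions follow from its two elementary consequences used above: the constancy of $\Kj(\cdot,0)$, which collapses the correction terms in the diastasis, and the degree-preserving orthogonal splitting of the Bergman space, which produces the bigrading of $(b_{jk})$.
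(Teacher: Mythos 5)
Your proof is correct, and it rests on the same key input as the paper's --- the circledness of $\Omega$ in its Harish--Chandra realization --- but the execution differs in two small ways worth noting. For the first claim the paper never touches the off-diagonal values $\Kj(z,0)$: it only computes $\Kj(0,0)=1/V(\Omega)$ from the reproducing property, and then deduces from rotation invariance of the diagonal restriction $z\mapsto\log(V(\Omega)\Kj(z,z))$ that every monomial in its expansion has $|m_j|=|m_k|$, so that $a_{j0}=a_{0j}=0$ and Theorem \ref{chardiast} applies. Your primary route instead verifies the four-point definition of the diastasis directly, which requires the stronger off-diagonal fact $\Kj(z,0)\equiv\Kj(0,0)$; you supply this correctly via the orthogonal decomposition of the Bergman space into homogeneous pieces, and your ``equivalently'' remark is in substance the paper's argument. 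For the second claim your treatment is if anything cleaner than the paper's: you observe that $e^{\dd^\Omega_0}-1=V(\Omega)\Kj(z,z)-1$ and apply rotation invariance to the kernel itself, whereas the paper transfers the bigrading from $\dd^\Omega_0$ to its exponential ``by the chain rule,'' a step it leaves to the reader. Both routes are sound; yours buys a more self-contained verification at the cost of invoking the $L^2$-orthogonality of homogeneous components, which the paper's argument avoids.
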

\begin{proof}
$\dd^\Omega_0(z)$ is centered at the origin, in fact by the reproducing property of the  kernel we have
\begin{equation}
\frac{1}{ \Kj(0,0)}=\int_{\Omega} \frac{1}{\Kj(\zeta,0)}\Kj(\zeta,0)d\mu,\nonumber
\end{equation}
hence $\Kj(0,0)=1/V(\Omega)$, and substituting in (\ref{diastberg}) we obtain $\dd^\Omega_0(0)=0$. By the circularity of $\Omega$, that is $z\in \Omega,\ \theta\in\R$
 imply $e^{i\theta}z\in\Omega$,
rotations around the origin are automorphisms and hence isometries, that leave $\dd^\Omega_0$ invariant.  Thus we have $\dd^\Omega_0(z)=\dd^\Omega_0(e^{i\theta}z)$ for any $0\leq\theta\leq 2\pi$, that is, each time we have a monomial $(z)^{m_j}(\bar z)^{m_k}$ in $\dd^\Omega_0(z)$, we must have $$(z)^{m_j}(\bar z)^{m_k}=e^{i|m_j|\theta}(z)^{m_j} e^{-i|m_k|\theta}(\bar z)^{m_k}=(z)^{m_j}(\bar z)^{m_k} e^{(|m_j|-|m_k|)i\theta}$$ implying $|m_j|=|m_k|$.
This means that  every  monomial in the expansion of  $\dd^\Omega_0(z)$   has holomorphic and antiholomorphic part with the same degree. 
Hence, by Theorem \ref{chardiast}, $\dd^\Omega_0(z)$ is the diastasis
for $g_B$. By the chain rule the same property holds true for   $e^{\dd^\Omega_0(z)}-1$
and the second part of the proposition follows immediately. 
\end{proof}

Before proving Theorem \ref{wallach} we recall
the definition of the  {\em Wallach set}  of
an irreducible bounded symmetric domain  $\Omega$ of genus $\gamma$,
 referring  the reader to  \cite{arazy}, \cite{faraut} and \cite{upmeier} for more details and results.
 This set, denoted by $W(\Omega)$, consists of all
$\lambda\in\C$ such that there exists a Hilbert space
${\cal H}_\lambda$ whose   reproducing kernel is   $\Kj ^{\frac{\lambda}{\gamma}}$.
This is equivalent to the requirement that 
$\Kj ^{\frac{\lambda}{\gamma}}$
is positive definite, i.e.  for all $n$-uples of  points $x_1,\dots,x_n$ 
belonging to $\Omega$ the $n\times n$ matrix $(\Kj(x_{\alpha},x_{\beta})^{\frac{\lambda}{\gamma}})$, is positive  {\em semidefinite}.
It turns out (see Corollary $4.4$ p. $ 27$ in  \cite{arazy} and references therein)
that $W(\Omega)$ consists only of real numbers and depends   on two of the domain's invariants, denoted  by  $a$ (strictly positive real number) and  $r$
(the rank of $\Omega$).
More precisely we have
\begin{equation}\label{wallachset}
W(\Omega)=\left\{0,\,\frac{a}{2},\,2\frac{a}{2},\,\dots,\,(r-1)\frac{a}{2}\right\}\cup \left((r-1)\frac{a}{2},\,\infty\right).
\end{equation}
The set $W_d=\left\{0,\,\frac{a}{2},\,2\frac{a}{2},\,\dots,\,(r-1)\frac{a}{2}\right\}$ and the interval $W_c= \left((r-1)\frac{a}{2},\,\infty\right)$
are called respectively  the {\em discrete} and {\em continuous} part   of the Wallach set of the domain 
$\Omega$.
\begin{remar}\label{rchimm}\rm
When $\Omega$ has rank $r=1$, namely $\Omega$ is the complex hyperbolic space $\CH^d$,
then $g_B=(d+1)g_{hyp}$, where $g_{hyp}$ is the hyperbolic metric on $\CH^d$.
In this case (and only in this case) $W_d=\{0\}$ and $W_c=(0, \infty)$.
Therefore our Theorem \ref{wallach} is asserting that,
for all positive constants $c$, $(\CH^d, cg_{hyp})$ admits a full K\"ahler immersion into $ \CP^\infty$.
This is well-known and  can also be proved by using Calabi's criterion
(see  Theorem $13$ in \cite{Cal}).
\end{remar}

\begin{proof}[Proof of Theorem \ref{wallach}]
Let $f\!:(\Omega,c\g_B)\rightarrow\CP^\infty$ be a K\"ahler immersion, we want to show that 
$c\gamma$ belongs to  $W(\Omega)$, i.e. 
 $\Kj^c$ is positive definite.
Since $\Omega$ is contractible it is not hard to see that there exists a sequence
 $f_j, j=0, 1\dots$ of holomorphic functions defined on $\Omega$, not vanishing simultaneously, such that the immersion 
 $f$ is given by $f(z)=[\dots ,f_j(z), \dots ],\ j=0,1 \dots$, where $[\dots ,f_j(z), \dots ]$ denotes the equivalence
 class in $\ell ^2(\C)$ (two sequences are equivalent iff they differ by the multiplication by a nonzero complex number). Let $x_1,\dots,x_n\in \Omega$. Without loss of generality (up to unitary transformation of ${\C}P^{\infty}$)
we can assume that   $f(0)=e_1$, where $e_1$ is the first vector of the canonical basis of $\ell ^2({\C})$, and $f(x_j)\notin H_0$, $\po\ j=1,\dots, n$.
Therefore, by Theorem \ref{Calabi} and  Proposition \ref{diastdom},
we have 
$$c\dd^\Omega_0(z)=\log [V(\Omega)^c\,\Kj^c(z, z)]=\log\left(1+\sum_{j=1}^{\infty}\frac{|f_j(z)|^2}{|f_0(z)|^2}\right),\ \ z \in  \Omega\setminus f^{-1}(H_0).$$
\begin{equation}
V(\Omega)^c\,\Kj^c(x_{\alpha},x_{\beta})=1+\sum_{j=1}^\infty g_j(x_{\alpha})\overline{g_j(x_{\beta})},
\ g_k=\frac{f_k}{f_0}.\nonumber
\end{equation}
Thus for every $(v_1, \dots ,v_n)\in \C^n$ one has
$$\sum_{\alpha, \beta =1}^nv_{\alpha}\Kj ^c (x_{\alpha}, x_{\beta})\bar v_{\beta}=
\frac{1}{V(\Omega)^c}\sum_{k=0}^{\infty}|v_1g_k(x_1)+\cdots +v_ng_k(x_n)|^2\geq 0, g_0=1$$
and hence the matrix  $(\Kj^c(x_{\alpha}, x_{\beta}))$ is positive semidefinite.
Conversely, 
assume that $c\gamma\in W(\Omega)$.
Then, by the very definition of Wallach set,
there exists a Hilbert space ${\cal H}_{c\gamma}$ whose reproducing kernel
is $\Kj^c=\sum_{j=0}^{\infty}|f_j|^2$, where $f_j$ is an orthonormal basis of 
${\cal H}_{c\gamma}$.
Then  the holomorphic map 
$f :\Omega\rightarrow \ell^2 (\C)\subset \CP^{\infty}$  constructed by using this orthonormal basis 
satisfies $f^*(g_{FS})=c\g_B$. In order to prove that this map is equivariant  
write $\Omega =G/K$ where $G$ is the simple Lie group acting holomorphically and isometrically 
on $\Omega$ and $K$ is its isotropy group. For each $h\in G$ the map $f\circ h:(\Omega, c\g_B)\f\CP^{\infty}$ is a full \K\ immersion and therefore by Calabi's rigidity (Theorem \ref{Calabirig}) there exists
a unitary transformation $U_h$ of $\CP^{\infty}$ such that $f\circ h=U_h\circ f$ and we are done.
\end{proof}

\begin{remar}\rm
 In \cite{arazy} it is proven that  if $\lambda$ belongs to $W(\Omega)\setminus \{0\}$ then  $G$ admits a representation in the Hilbert space ${\cal H}_{\lambda}$. This is in accordance with our result. Indeed
if $c\gamma$ belongs to $W(\Omega)\setminus \{0\}$  then
the correspondence $h\mapsto U_h, h\in G$ defined in the last part of the  proof of Theorem \ref{wallach} is a representation of $G$.
\end{remar}

We conclude this section with some remarks regarding \K\ immersions of bounded symmetric domains into other complex space forms different from $\CP^{\infty}$.

\begin{remar}\rm
 The multiplication of the Bergman metric  by $c$ in  Theorem \ref{wallach} is harmless  when one studies the \K\ immersions of a \K\ manifold $(M, g)$  into the infinite dimensional complex Euclidean space  $\ell ^2({\C})$ equipped with the flat metric $g_0$. Indeed if $f:M\rightarrow \ell ^2({\C})$ satisfies $f^*(g_0)=g$ then 
$(\sqrt{c}f)^*(g_0)=cg$. It is worth pointing out that  the only bounded symmetric domain which admits a  \K\ immersion into $\ell ^2({\C})$ has rank one, i.e. it is the product of complex hyperbolic spaces (see    \cite{symm}  for a proof). Actually the authors believe the validity of the following conjecture:
{\em A complete \K\ manifold with negative scalar curvature which admits a \K\ immersion into $\ell ^2({\C})$ is a bounded symmetric domain of rank one}.
\end{remar}
\begin{remar}\rm
For the case  of  \K\ immersions of bounded symmetric domains  of noncompact type into the finite (resp. infinite) dimensional complex hyperbolic space $\CH^N$ (resp. $\CH^{\infty}$), namely the unit ball in $\C^n$ (resp. $\ell ^2({\C})$) equipped with the hyperbolic metric $g_{hyp}$, we can prove the following theorem:
 {\em if a $n$-dimensional bounded symmetric domain $(\Omega, c\g_B)$ admits a \K\ immersion into $\CH^N$ (resp. $\CH^{\infty}$)  then $\Omega =\CH^n$, $g_B=g_{hyp}$ and  $c=1$ (resp. $c\leq 1$).}
 The proof follows easily  by  Theorem 17 in \cite{alek} where it is shown that  a \K\ manifold which  admits a \K\ immersion into a  complex  hyperbolic space  is locally irreducible (for the  values of $c$ see   Theorem $13$ in \cite{Cal}).
 \end{remar}


\section{Cartan--Hartogs domains and the proof of Theorem \ref{corw1}}\label{cartanhartogs}
In order to prove Theorem \ref{corw1} we briefly recall some recent results
about Einstein metrics on Cartan--Hartogs domains.

Let $\Omega$ be an irreducible bounded symmetric domain of complex dimension $d$ and genus $\gamma$.
For all positive real numbers $\mu$ consider the family of Cartan-Hartogs domains 
\begin{equation}\label{defm}
\M_{\Omega}(\mu)=\left\{(z,w)\in \Omega\times\C,\ |w|^2<\N_\Omega^\mu(z,z)\right\},
\end{equation}
where $\N_\Omega(z,z)$ is the  {\em generic norm} of $\Omega$ namely,
$$N_{\Omega}(z, z)=(V(\Omega)K(z, z))^{-\frac{1}{\gamma}}.$$
The domain $\Omega$ is called  the {\em  base} of the Cartan--Hartogs domain 
$\M_{\Omega}(\mu)$ (one also  says that 
$\M_{\Omega}(\mu)$  is based on $\Omega$).
Consider on $\M_{\Omega}(\mu)$ the metric $\g(\mu)$  whose globally
defined  K\"ahler potential around the origin is given by
\begin{equation}\label{diastM}
\dd_0(z,w)=-\log(\N_{\Omega}^\mu(z,z)-|w|^2).
\end{equation}
The following theorem summarizes what we need about these domains
(see  \cite{roos} and \cite{compl} for a proof.)
\begin{theor}[G. Roos, A. Wang, W. Yin, L. Zhang, W. Zhang]\label{roos}
Let $\mu_0=\gamma/(d+1)$. Then $\left(\M_{\Omega}(\mu_0),\g(\mu_0)\right)$ is a complete K\"ahler--Einstein manifold which is homogeneous  if and only the rank of  $\Omega$ equals $1$,
i.e.  $\Omega = \CH^d$.
\end{theor}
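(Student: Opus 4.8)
The plan is to treat the three assertions---Einstein, complete, and the homogeneity dichotomy---separately, with the Einstein property reducing to a single determinant computation, completeness to a boundary estimate along the fibres, and the rank characterization being the genuinely delicate point. For the first, write $T=\N_\Omega(z,z)$ and $Y=T^\mu-|w|^2$, so that $\dd_0=-\log Y$, and split the metric into the $d\times d$ block $\g_{i\bar j}$ in the $z$-variables, a mixed $z$--$w$ column, and the scalar $\g_{w\bar w}=T^\mu/Y^2$. The only input I need about the base is that $P_{i\bar j}:=-\de_{z_i}\de_{\bar z_j}\log T$ is a positive multiple of the Bergman metric of $\Omega$ and hence, being itself K\"ahler--Einstein, satisfies $\det(P_{i\bar j})=c\,T^{-\gamma}$ for a constant $c>0$. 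Substituting $T_{i\bar j}=-TP_{i\bar j}+T_iT_{\bar j}/T$ turns the $z$-block into $\g_{i\bar j}=\frac{\mu T^\mu}{Y}P_{i\bar j}+\frac{\mu^2|w|^2T^{\mu-2}}{Y^2}T_iT_{\bar j}$, while the mixed column is proportional to the vector $(T_i)$. A Schur-complement evaluation of the block determinant, using the Sherman--Morrison formula for the rank-one correction along $(T_i)$, makes every term involving $(T_i)$ cancel and leaves $\det(\g_{\alpha\bar\beta})=c\,\mu^d\,T^{\mu(d+1)-\gamma}\,Y^{-(d+2)}$. Since $\ric=\lambda\,\g(\mu)$ is equivalent to $\det(\g_{\alpha\bar\beta})=\mathrm{const}\cdot e^{-\lambda\dd_0}=\mathrm{const}\cdot Y^{\lambda}$, the power of $T$ must vanish; this forces $\mu(d+1)=\gamma$, i.e. $\mu=\mu_0$, and then $\ric(\g(\mu_0))=-(d+2)\,\g(\mu_0)$, exactly as for the ball.

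For completeness I would estimate $\g(\mu_0)$ near the boundary $\{Y=0\}$. Along the fibre direction the length element is $\sqrt{\g_{w\bar w}}\,|dw|=\frac{T^{\mu/2}}{Y}\,|dw|$, and setting $s=T^{\mu/2}-|w|$ near $|w|^2=T^\mu$ gives $Y\approx 2T^{\mu/2}s$, so the radial integral behaves like $\int ds/(2s)$ and diverges: no boundary point is reached along a fibre in finite length. Combined with the completeness of the Bergman metric on the base $\Omega$, which governs the $z$-directions through the term $\frac{\mu T^\mu}{Y}P_{i\bar j}$, a Cauchy-sequence argument using this fibred structure shows every Cauchy sequence remains in a compact subset; equivalently, one checks that $\dd_0$ is a proper exhaustion whose gradient has bounded $\g(\mu_0)$-norm, which yields metric completeness.

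Finally, the homogeneity dichotomy. When $r=1$ one has $\Omega=\CH^d$, $\gamma=d+1$, hence $\mu_0=1$ and $\N_\Omega(z,z)=1-|z|^2$, so that $\M_\Omega(\mu_0)=\{\,|z|^2+|w|^2<1\,\}$ is the unit ball $\mathbb{B}^{d+1}$ and $\g(\mu_0)$ is its homogeneous hyperbolic metric; transitivity is then immediate. The hard part is the converse, namely showing that $(\M_\Omega(\mu_0),\g(\mu_0))$ is \emph{not} homogeneous when $r\geq 2$. Because the metric is Einstein its scalar curvature is constant, so I would instead produce a higher-order scalar curvature invariant---the squared norm $\|R\|^2$ of the curvature tensor, or of $\nabla R$---and show by direct computation from the determinant formula above that it is non-constant along the slice $\Omega\times\{0\}$ precisely when $r\geq 2$, the rank-one case being the only one in which the curvature collapses to the constant-holomorphic-sectional-curvature form of a symmetric space. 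Since every curvature invariant of a homogeneous Riemannian manifold is constant, this non-constancy rules out homogeneity. I expect this last curvature computation to be the main obstacle: it requires differentiating the explicit metric two further times and extracting the dependence on the Jordan-algebraic invariants $a$ and $r$ of $\Omega$, and it is exactly here that the rank enters, since for $r=1$ the extra invariants degenerate to constants.
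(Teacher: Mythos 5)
A preliminary remark: the paper does not prove Theorem \ref{roos} at all --- it is imported from \cite{roos} and \cite{compl} --- so there is no internal proof to measure your attempt against, and I can only assess it on its own terms. Your Einstein computation is correct and essentially complete: with $T=\N_\Omega(z,z)$ and $Y=T^\mu-|w|^2$ one indeed gets $\g_{i\bar j}=\frac{\mu T^\mu}{Y}P_{i\bar j}+\frac{\mu^2|w|^2T^{\mu-2}}{Y^2}T_iT_{\bar j}$ and $\g_{w\bar w}=T^\mu/Y^2$, the Schur complement kills the rank-one term exactly, and $\det(\g_{\alpha\bar\beta})=c\,\mu^d\,T^{\mu(d+1)-\gamma}Y^{-(d+2)}$ follows from $\det(P_{i\bar j})=cT^{-\gamma}$; since $\de\bar\de\log Y$ and $\de\bar\de\log T$ are linearly independent, the Einstein condition forces $\mu(d+1)=\gamma$ and gives $\ric=-(d+2)\g(\mu_0)$. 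The completeness argument is only a sketch --- the fibre estimate alone does not control sequences drifting toward $\de\Omega$, and the claim that $\dd_0$ is an exhaustion with $\g(\mu_0)$-bounded gradient is precisely what would need to be verified --- but it is the right criterion and the gap is fillable.

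The genuine gap is the non-homogeneity for $r\geq 2$, which is the only hard assertion of the theorem, and your specific plan for it cannot work as stated. Every $\phi\in\Aut(\Omega)$ lifts to a holomorphic isometry $(z,w)\mapsto(\phi(z),J_\phi(z)^{\mu/\gamma}w)$ of $(\M_\Omega(\mu),\g(\mu))$, because the transformation rule $\N_\Omega(\phi z,\phi z)=|J_\phi(z)|^{2/\gamma}\N_\Omega(z,z)$ makes the potential (\ref{diastM}) change only by a pluriharmonic function; together with the rotations $w\mapsto e^{i\theta}w$ these isometries act transitively on the slice $\Omega\times\{0\}$. Consequently every scalar curvature invariant ($\|R\|^2$, $\|\nabla R\|^2$, $\dots$) is \emph{automatically} constant along $\Omega\times\{0\}$ for every $\Omega$ and every $\mu$, so showing such an invariant non-constant on that slice --- your stated strategy --- is impossible even when $r\geq 2$. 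To detect non-homogeneity one must instead exhibit an invariant that varies transversally to these orbits, i.e.\ as a function of $|w|^2\N_\Omega^{-\mu}(z,z)$, and prove that this variation is nontrivial exactly when $r\geq 2$; this is where the rank and the invariant $a$ actually enter, and it is not attempted. As it stands, the homogeneity dichotomy --- the part of the theorem the paper actually relies on to get non-homogeneous examples --- is unproved.
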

\begin{remar}\rm\label{rrchimm}
Observe that when $\Omega=\CH^d$, we have $\mu_0=1$, $\M_\Omega(1)=\CH^{d+1}$ and $\g(1)=g_{hyp}$ (cfr. Remark \ref{rchimm}).
\end{remar}

In the following proposition, interesting on its own sake,   we describe the \K\ immersions of 
a Cartan--Hartogs domain 
into $\CP^{\infty}$
in terms of its base.

\begin{prop}\label{lemmadiastM}
The potential $\dd_0(z,w)$ given by (\ref{diastM}) is the diastasis
around the origin  of the metric
$\g(\mu)$.
Moreover,  $c\g(\mu)$ is projectively induced
if and only if  $(c+m)\frac{\mu}{\gamma}\g_B$ is 
projectively induced for every integer $m\geq 0$.
\end{prop}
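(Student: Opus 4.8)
The plan is to proceed in two stages: first establish that $\dd_0(z,w)$ is the diastasis, then compute the power expansion of $e^{c\,\dd_0}-1$ and recognize the projective-inducedness condition in terms of the base. For the first stage I would invoke Theorem \ref{chardiast}: it suffices to verify that the coefficients $a_{j0}$ and $a_{0j}$ in the power expansion of $\dd_0$ around the origin vanish. Writing $\dd_0(z,w)=-\log(\N_\Omega^\mu(z,z)-|w|^2)$, I would use the same circularity argument as in Proposition \ref{diastdom}. The domain $\M_\Omega(\mu)$ is circular in the combined variables $(z,w)$, since $(z,w)\in\M_\Omega(\mu)$ implies $(e^{i\theta}z,e^{i\theta}w)\in\M_\Omega(\mu)$ (because $\N_\Omega$ is invariant under the circle action coming from $\Kj(e^{i\theta}z,e^{i\theta}z)=\Kj(z,z)$, which itself follows from the circularity of $\Omega$). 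Hence every monomial in the expansion of $\dd_0$ must have holomorphic and antiholomorphic parts of equal total degree, forcing $a_{j0}=a_{0j}=0$ and making $\dd_0$ the diastasis.

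For the second stage I would exponentiate. The key algebraic identity is
\begin{equation}
e^{c\,\dd_0(z,w)}=\left(\N_\Omega^\mu(z,z)-|w|^2\right)^{-c}
=\N_\Omega^{-c\mu}(z,z)\left(1-\frac{|w|^2}{\N_\Omega^\mu(z,z)}\right)^{-c}.\nonumber
\end{equation}
Expanding the second factor by the binomial series in the variable $|w|^2/\N_\Omega^\mu$ gives
\begin{equation}
e^{c\,\dd_0(z,w)}=\sum_{m=0}^{\infty}\binom{-c}{m}(-1)^m\,\N_\Omega^{-(c+m)\mu}(z,z)\,|w|^{2m},\nonumber
\end{equation}
where the binomial coefficients $\binom{-c}{m}(-1)^m=\frac{c(c+1)\cdots(c+m-1)}{m!}$ are strictly positive for every $m\geq0$ and $c>0$. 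Now I would use the relation $\N_\Omega^{-(c+m)\mu}=\big(V(\Omega)\Kj\big)^{(c+m)\mu/\gamma}$ coming from the definition of the generic norm, so that each summand is, up to the positive constant $V(\Omega)^{-(c+m)\mu/\gamma}$, exactly $\Kj^{(c+m)\mu/\gamma}(z,z)$ times the nonnegative factor $|w|^{2m}$.

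The crux is then to match positive semidefiniteness of the infinite matrix $(b_{jk})$ for $e^{c\,\dd_0}-1$ against Calabi's criterion (Theorem \ref{criterium}) applied simultaneously to all the metrics $(c+m)\frac{\mu}{\gamma}\g_B$ on $\Omega$. The point is that the variables $z$ and $w$ are independent: distinct powers $|w|^{2m}$ contribute to disjoint blocks of the coefficient matrix, because a monomial in the full expansion has the form $(z)^{p}(\bar z)^{q}w^{m}\bar w^{m}$ and the $w$-degree $m$ is a rigid label that cannot be mixed between different terms. Consequently the full matrix $(b_{jk})$ decomposes as a direct sum over $m\geq0$ of blocks, the $m$-th block being (a positive multiple of) the Calabi matrix of $\Kj^{(c+m)\mu/\gamma}$, i.e. of the diastasis $\dd_0^\Omega$ for the metric $(c+m)\frac{\mu}{\gamma}\g_B$. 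A direct sum is positive semidefinite precisely when every summand is, which by Calabi's criterion translates into each $(c+m)\frac{\mu}{\gamma}\g_B$ being projectively induced; the infinite-rank condition is automatic since already the $m=0$ block, when $c>0$, carries infinitely many independent monomials. I expect the main obstacle to be making the block-decomposition argument fully rigorous: one must be careful that raising $e^{\dd_0}$ to the power (rather than the diastasis itself) interacts correctly with the $z$/$w$ separation, and that the positivity of the binomial coefficients is genuinely used to pass equivalence through in both directions, so that the constant $V(\Omega)$ factors do not disturb semidefiniteness.
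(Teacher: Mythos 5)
Your proposal is correct and follows essentially the same route as the paper: block-diagonalize the coefficient matrix of $e^{c\dd_0}-1$ according to the $w$-degree and identify the $m$-th block, up to the positive factor $\frac{c(c+1)\cdots(c+m-1)}{m!}$, with the Calabi matrix of $e^{(c+m)\frac{\mu}{\gamma}\dd_0^\Omega}-1$ --- your explicit binomial expansion of $(\N_\Omega^\mu(z,z)-|w|^2)^{-c}$ is exactly the identity the paper extracts by differentiating at the origin. The only cosmetic differences are that the paper verifies $A_{j0}=A_{0j}=0$ by direct differentiation (radiality in $w$ together with the fact that $-\log\N_\Omega^\mu$ is $\frac{\mu}{\gamma}\dd_0^\Omega$, already a diastasis) rather than by circularity of $(z,w)\mapsto(e^{i\theta}z,e^{i\theta}w)$, and that it first splits the matrix into blocks by total degree before splitting by $w$-degree.
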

\begin{proof}
The power expansion around the origin of $\dd_0(z,w)$ can be written as
\begin{equation}\label{powexpnot}
\dd_0(z,w)=\sum_{j,k=0}^\infty A_{jk} (zw)^{m_j}(\bar z\bar w)^{m_k}
\end{equation}
where $m_j$ are ordered $(d+1)$-uples of integer and $$(zw)^{m_j}=z_1^{m_{j,1}}\cdots z_d^{m_{j,d}} w^{m_{j,d+1}}.$$
In order to prove that   $\dd_0(z,w)$ is the diastasis for $\g(\mu)$
we need to verify  that $A_{j0}=A_{0j}=0$ (see Theorem \ref{chardiast}).
This is straightforward. Indeed if we take derivatives with respect 
either to $z$ or $\bar z$  is the same as deriving the function $-\log(\N_\Omega^\mu(z,z))=\frac{\mu}{\gamma}\dd_0^\Omega(z)$ that is the diastasis of $(\Omega,\frac{\mu}{\gamma}\g_B)$, thus we obtain $0$. If we take derivatives with respect either to $w$ or  $\bar w$  we obtain zero no matter how many times we derive with respect to $z$ or $\bar z$, since $\dd_0(z,w)$ is radial in $w$.

In order to prove the second part of the proposition
take the function 
\begin{equation}\label{funzione}
e^{c\dd_0(z,w)}-1=\frac{1}{(\N_\Omega^\mu(z,\bar z)-|w|^2)^c}-1,
\end{equation}
and using the same notations as in (\ref{powexpnot}) write the power expansion around the origin as
\begin{equation}\nonumber
e^{c\dd_0(z,w)}-1=\sum_{j,k=0}^\infty B_{jk} (zw)^{m_j}(\bar z\bar w)^{m_k}.
\end{equation}
By Calabi's criterion (Theorem (\ref{criterium})), $c\g(\mu)$
is projectively induced  if and only if $B=(B_{jk})$ is positive semidefinite of infinite rank.
The generic entry of $B$ is given by
\begin{equation}
B_{jk}=\frac{1}{m_j!\cdot m_k!}\frac{\de^{|m_j|+|m_k|}}{\de (zw)^{m_j}\de (\bar z\bar w)^{m_k}}\left(\frac{1}{(\N_\Omega^\mu(z,\bar z)-|w|^2)^c}-1\right)\Bigg|_0,\nonumber
\end{equation}
where $m_j! =m_{j,1}!\cdots m_{j,d+1}!$ and  $\de(zw)^{m_j}=\de z_1^{m_{j,1}}\cdots\de z_d^{m_{j,d}} \de w^{m_{j,d+1}}$.
By Proposition (\ref{diastdom}) we have
\begin{equation}\label{condi}
m_{j,1}+\cdots +m_{j,d}\neq m_{k,1}+\cdots +m_{k,d} \Longrightarrow B_{jk}=0,
\end{equation}
and since (\ref{funzione}) is radial in $w$ we also have
\begin{equation}\label{condii}
m_{j,d+1}\neq m_{k,d+1} \Longrightarrow B_{jk}=0.
\end{equation}

Thus, $B$ is a $\infty\times\infty$ matrix of the form
\begin{equation}
B=\left(
\begin{array}{cccccc}
0&0&0&0&0&0\\
0&E_1&0&0&0&\dots\\
0&0&E_2&0&0&\dots\\
0&\vdots&0&E_3&0&\dots\\
0&&\vdots&0&\ddots&
\end{array}
\right)\nonumber
\end{equation}
where the generic block $E_i$ contains derivatives
$\de(zw)^{m_j}$$\de(\bar z\bar w)^{m_k}$ of   order $2i$, $i=1,2,\dots$ such that $|m_j|=|m_k|=i$.
We can further write
\begin{equation}\label{matrixz}
E_i=\left(
\begin{array}{ccc}
F_{z(i)}(0)&0&0\\
0&F_{w(i)}(0)&0\\
0&0&F_{(z,w)(i)}(0)
\end{array}
\right)
\end{equation}
where $F_{z(i)}(0)$ (resp. $F_{w(i)}(0)$, $F_{(z,w)(i)}(0)$) 
contains derivatives
$\de(zw)^{m_j}$ \linebreak
 $\de(\bar z\bar w)^{m_k}$ (of   order $2i$  with $|m_j|=|m_k|=i$) such that $m_{j, d+1}=m_{k, d+1}=0$ (resp. $m_{j, d+1}=m_{k, d+1}=i$, $m_{j, d+1}, m_{k, d+1}\neq 0, i$).
(Notice also  that we have $0$ in all the other entries because of (\ref{condi}) and (\ref{condii})).
Since the derivatives are evaluated at the origin, deriving (\ref{funzione}) with respect to
$\de(zw)^{m_j}$$\de(\bar z\bar w)^{m_k}$ with $|m_j|=|m_k|=i$ and $m_{j, d+1}=m_{k, d+1}=0$
 is the same as deriving the function
\begin{equation}\label{funzdominio}
\frac{1}{(\N_\Omega^\mu(z,z))^c}-1=e^{c\frac{\mu}{\gamma}\dd_0^\Omega(z)}-1.
\end{equation}
Thus,  by Calabi's criterion, all the blocks $F_{z(i)}(0)$ are positive semidefinite if and only if $c\frac{\mu}{\gamma} \g_B$ is projectively induced.
Observe that  the blocks $F_{w(i)}(0)$ is semipositive definite without extras assumptions.  Indeed 
 if we consider derivatives $\de(zw)^{m_j}$$\de(\bar z\bar w)^{m_k}$ of (\ref{funzione})
with 
$|m_j|=|m_k|=i$ and $m_{j, d+1}=m_{k, d+1}=i$, since $\N_\Omega^\mu(z,z)$ evaluated in $0$ is equal to $1$, it is the same as deriving the function $1/(1-|w|^2)^c-1=\left(\sum_{j=0}^\infty |w|^{2j}\right)^c-1$
and the claim follows.
Finally,  consider  the block $F_{(z,w)(i)}(0)$. It can be written as
\begin{equation}
F_{(z,w)(i)}(0)=\left(
\begin{array}{cccc}
H_{z(i-1),w(1)}(0)&0&0&0\\
0&H_{z(i-2),w(2)}(0)&0&0\\
\vdots&&\ddots&\\
0&0&0&H_{z(1),w(i-1)}(0)
\end{array}
\right)\nonumber
\end{equation}
where the generic block $H_{z(i-m),w(m)}(0)$, $1\leq m\leq i-1$, 
contains derivatives
$\de(zw)^{m_j}$$\de(\bar z\bar w)^{m_k}$ of   order $2i$  such that $|m_j|=|m_k|=i$ and $m_{j, d+1}=m_{k, d+1}=m$ evaluated at zero
(as before, by (\ref{condi}) and (\ref{condii}) all entries outside these blocks are $0$).
Now it is not hard to verify   that  this blocks can be obtained by taking derivatives
$\de(zw)^{m_j}$$\de(\bar z\bar w)^{m_k}$ of order  $2(i-m)$ such that $|m_j|=|m_k|=2(i-m)$ and $m_{j, d+1}=m_{k, d+1}=0$  of  the function
\begin{equation}\nonumber
\frac{(m+c-1)!}{(c-1)!\; m!\ \N^{\mu(c+m)}_\Omega(z, z)}-1=
e^{(c+m)\frac{\mu}{\gamma}\dd_0^\Omega(z)}-1
\end{equation}
and evaluating at $z=\bar z =0$.
Thus, again by Calabi's criterion,
$F_{(z,w)(i)}(0)$
is positive semidefinite
iff $(c+m)\frac{\mu}{\gamma}g_B$, $m\geq 1$
is projectively induced
and this ends the proof of the proposition.
\end{proof}

We are now in the position to prove Theorem \ref{corw1}.

\begin{proof}[Proof of Theorem \ref{corw1}]
Take  $\mu=\mu_0=\gamma/(d+1)$ in (\ref{defm}) and $\Omega\neq\CH^d$. By Theorem \ref{roos} $\left(\M_\Omega(\mu_0),c\g(\mu_0)\right)$ is K\"ahler-Einstein, complete and nonhomogeneous for all positive real number $c$.
By Proposition \ref{lemmadiastM}
 $c\g(\mu_0)$  
 is projectively induced
if and only if $\frac{c+m}{d+1}\g_B$ is projectively induced, for all nonnegative integer $m$.
By Theorem \ref{wallach} this  happens if     $\frac{(c+m)}{d+1}\geq 
\frac{(r-1)a}{2\gamma}$.
Hence  $cg (\mu_0)$ with $c\geq \frac{(r-1)(d+1)a}{2\gamma}$ is the desired family of
projectively induced  K\"ahler-Einstein metrics.
\end{proof}

Notice that by choosing $0<c< \frac{a(d+1)}{2\gamma}$ (and $r\neq 1$)  one also gets the existence of  a continuous  family of complete,  nonhomogeneous and  K\"ahler-Einstein metrics which are not projectively induced.

\small{

}

\end{document}